\newtheorem{lemma}{Lemma}
\newtheorem{proposition}{Proposition}
\newtheorem{theorem}{Theorem}
\newtheorem{definition}{Definition}
\newcommand{\g}{\mathfrak{g}}
\newcommand{\E}{\mathbb{E}}
\begin{document}

\preprint{APS/123-QED}

\title{Another Mar\v{c}enko-Pastur law for Kendall’s tau}

\author{Pierre Bousseyroux}
\email{pierre.bousseyroux@polytechnique.edu}
\affiliation{Chair of Econophysics and Complex Systems, Ecole Polytechnique, 91128 Palaiseau Cedex, France}
\affiliation{LadHyX UMR CNRS 7646, \'Ecole Polytechnique, 91128 Palaiseau Cedex, France}

\author{Tomas Espana}
\email{tomas.espana@student-cs.fr}
\affiliation{Chair of Econophysics and Complex Systems, Ecole Polytechnique, 91128 Palaiseau Cedex, France}
\affiliation{LadHyX UMR CNRS 7646, \'Ecole Polytechnique, 91128 Palaiseau Cedex, France}

\author{Matteo Smerlak}
\email{matteo.smerlak@cfm.com}
\affiliation{Capital Fund Management, 23 rue de l'Universit\'e, 75007 Paris, France}

\date{\today}

\begin{abstract}
    Bandeira \textit{et al.} (2017) show that the eigenvalues of the Kendall correlation matrix of $n$ i.i.d. random vectors in $\mathbb{R}^p$ are asymptotically distributed like $1/3 + (2/3)Y_q$, where $Y_q$ has a Marčenko-Pastur law with parameter $q=\lim(p/n)$ if $p, n\to\infty$ proportionately to one another. Here we show that another Marčenko-Pastur law emerges in the "ultra-high dimensional" scaling limit where $p\sim q'\, n^2/2$ for some $q'>0$: in this quadratic scaling regime, Kendall correlation eigenvalues converge weakly almost surely to $(1/3)Y_{q'}$.
\end{abstract}

\keywords{} 

\maketitle



\FloatBarrier
\section{Introduction}

Since the seminal work of \citep{marchenko1967distribution}, it is known that the eigenvalues of the sample correlation matrix for i.i.d. data $X\in \mathbb{R}^{p\times n}$ follow the Marčenko-Pastur (MP) law $Y_q$ when the number of features $p$ scales linearly with the number of observations $n$, i.e. $p\sim qn$ for some $q>0$ as $p, n\to \infty$. This result provides a useful null hypothesis when testing for dependence in high-dimensional data, with applications ranging from genomics to finance. 

However, from both a practical and a conceptual standpoint, it is often preferable to use measures of dependence other than sample (or 'Pearson') correlation coefficients. Indeed, as is well known, Pearson correlations are neither invariant with respect to increasing transformations of the data nor robust to outliers or heavy-tailed data. A popular alternative is Kendall's tau, which measures rank correlations and shares neither of these deficiencies. Because the columns of the Kendall correlation matrix $\boldsymbol{\tau}$ are not independent, however, the classical Marčenko-Pastur theorem does not apply and new tools are required. 

Using an elegant (Hoeffding) decomposition, \citep{bandeira2017marvcenkopastur} proved that the spectrum of the Kendall matrix follows an affine transformation of the Marčenko-Pastur law: under the usual assumptions on the data matrix, they show that Kendall eigenvalues are distributed like $(1/3) + (2/3)Y_q$. Subsequent papers established the Tracy-Widom law of its largest eigenvalues \citep{tracy_widom_kendall}, the central limit theorem of linear spectral statistics \citep{clt_lss}, and the limiting spectral distribution (LSD) of $\boldsymbol{\tau}$ under dependence conditions \citep{li2023eigenvalues}. 

One property of Kendall's correlation matrix which contrasts with the Wishart ensemble is that $\boldsymbol{\tau}$ does not develop zero eigenvalues when $p>n$; in fact the first zero eigenvalue only emerges when $p=\binom{n}{2} = n(n-1)/2$. This motivates us to consider the behavior of Kendall eigenvalues in the `ultra-high dimensional" regime where $p$ scales quadratically, rather than linearly, with $n$. Moreover, in many practical high-dimensional problems, the number of samples remains limited while the feature dimension increases much more rapidly, making quadratic growth a realistic alternative to the usual linear scaling---for example in quantitative finance when constructing large portfolios (see, e.g., \citep{kendall_markowitz}). Extending the results of \citep{bandeira2017marvcenkopastur}, we show in Theorem \ref{theo:kendall_quad} that the LSD of Kendall matrices converges (weakly almost surely) to $(1/3)Y_{q'}$ with $q' = \lim 2p/n^2\in(0,\infty)$. The next paragraph introduces several notations inspired by \citep{tracy_widom_kendall}. 

\section{Main result}
\paragraph*{Kendall Correlation Matrix.}

Let $\mathbf{x} = (x_1, \ldots, x_p)^\top \in \mathbb{R}^p$. We assume that all the components of $\mathbf{x}$ are independent random variables, with absolutely continuous densities with respect to the Lebesgue measure. We do not require the components to be identically distributed, and no moment assumption on the components of $\mathbf{x}$ is needed. Let $\mathbf{x}_j = (x_{1j}, \ldots, x_{pj})^\top$ be $n$ i.i.d.\ samples of $\mathbf{x}$. Hereafter we write, for any integer $k \geq 1$, $[k] = \{1, 2, \ldots, k\}$. We also denote by $\mathbf{X} = (x_{ij})_{p,n}$ the $p \times n$ data matrix. 

From the data matrix $\mathbf{X}$ we can build the \textit{Kendall correlation matrix}, based on Kendall's tau correlation coefficient. The matrix is defined as follows:

\begin{definition}[Kendall Correlation Matrix]\label{def:kendall}
  Consider a data matrix $\mathbf{X}$ as defined previously. For any $k \in [p]$, we denote
  \begin{align}\label{defv}
    v_{k, (ij)} := \mathrm{sign}(x_{ki} - x_{kj}), \quad \text{for any} \quad i < j, \quad (i,j) \in [n]^2
  \end{align}
  and let
  \begin{align}
    \boldsymbol{\theta}_{(ij)} := \frac{1}{\sqrt{M}} \left( v_{1,(ij)}, \ldots, v_{p,(ij)} \right)^\top
  \end{align}
  where
  \begin{align}
    M = M(n) := \frac{n(n-1)}{2}.
  \end{align}
  Kendall's correlation matrix is defined as the following sum of rank-one projectors
  \begin{align}
    \boldsymbol{\tau} = \boldsymbol{\tau}_n := \sum_{i<j} \boldsymbol{\theta}_{(ij)} \boldsymbol{\theta}_{(ij)}^\top = \boldsymbol{\Theta} \boldsymbol{\Theta}^\top
  \end{align}
  where $\boldsymbol{\Theta}$ (implicitly $n$ dependent) is the $p \times \binom{n}{2}$ matrix defined by its columns as follows
  \begin{align}
    \boldsymbol{\Theta} := \left( \boldsymbol{\theta}_{(12)}, \ldots, \boldsymbol{\theta}_{(1n)}, \boldsymbol{\theta}_{(23)}, \ldots, \boldsymbol{\theta}_{(2n)}, \ldots, \boldsymbol{\theta}_{(n\!-\!1 \, n)} \right).
  \end{align}
\end{definition}

Observe that the rank-one projectors $\boldsymbol{\theta}_{(ij)} \boldsymbol{\theta}_{(ij)}^\top$ are not independent. For instance, $\boldsymbol{\theta}_{(ij)} \boldsymbol{\theta}_{(ij)}^\top$ and $\boldsymbol{\theta}_{(ik)} \boldsymbol{\theta}_{(ik)}^\top$ are correlated even if $j \neq k$. Moreover, the entries of the $p \times p$ matrix $\boldsymbol{\tau}$ are
  \begin{align}\label{eq:kendall_def_uni}
    \boldsymbol{\tau}_{(kl)} = \frac{2}{n(n-1)} \sum_{i<j} \mathrm{sign}(x_{ki} - x_{kj}) \mathrm{sign}(x_{li} - x_{lj})
  \end{align}
  which correspond to the Kendall correlation coefficient between the samples of $x_k$ and $x_l$.

Let $\lambda_1^{(n)}, \ldots, \lambda_p^{(n)}$ be the $p$ eigenvalues of $\boldsymbol{\tau}_n$. We note the empirical spectral distribution (ESD) of $\boldsymbol{\tau}_n$ by
\begin{align}
  F^{\boldsymbol{\tau}_n}(x) := \frac{1}{p} \sum_{i=1}^p I(\lambda_i^{(n)} \leq x)
\end{align}
where $I(\cdot)$ is the indicator function. When the sequence $(F^{\boldsymbol{\tau}_n})_n$ of ESDs associated with the random matrices $(\boldsymbol{\tau}_n)$ converges to a deterministic density function $F^{\boldsymbol{\tau}}$ almost surely, this function is called the limiting spectral distribution (LSD) of the sequence. Such convergence is referred to as almost sure weak convergence.

\paragraph*{Kendall's Limiting Spectral Distribution}
In the \textit{linear} scaling $p = \mathcal{O}(n)$, the spectrum of $\boldsymbol{\tau}$ follows an affine transformation of the MP distribution:

\begin{theorem}[Theorem 1 of \citep{bandeira2017marvcenkopastur}]\label{theo:bandeira}
  Under the assumptions of Def.\ \ref{def:kendall}, and if $q_n = p/n \rightarrow q >0$ when $n \rightarrow \infty$, the sequence $(F^{\boldsymbol{\tau}_n})_n$ converges weakly in probability to the distribution associated with the random variable $(1/3) + (2/3) Y_{q}$, where $Y_{q}$ has a MP law with parameter $q > 0$.
\end{theorem}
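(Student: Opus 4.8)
The plan is to establish the result through the Hoeffding (Hájek–projection) decomposition referred to above, which rewrites $\boldsymbol{\tau}$ as a classical sample-covariance matrix plus a deterministic scalar shift. First, since each component $x_k$ has a continuous distribution function $F_k$, the variables $U_{ki}:=F_k(x_{ki})$ are i.i.d.\ uniform on $[0,1]$ in $i\in[n]$, independent across $k\in[p]$, and $v_{k,(ij)}=\mathrm{sign}(U_{ki}-U_{kj})$ almost surely (ties having probability zero); this is precisely what lets us dispense with the identical-distribution and moment assumptions. Next, with $g(u):=2u-1=\E[\mathrm{sign}(u-U)]$ and $h(u,v):=\mathrm{sign}(u-v)-g(u)+g(v)=\mathrm{sign}(u-v)-2(u-v)$, one has the exact pointwise identity $\mathrm{sign}(U_{ki}-U_{kj})=\bigl(g(U_{ki})-g(U_{kj})\bigr)+h(U_{ki},U_{kj})$, where $h$ is completely degenerate, $\E[h(U,v)]=\E[h(u,V)]=0$. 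Splitting $\boldsymbol{\Theta}=\boldsymbol{\Theta}^{(1)}+\boldsymbol{\Theta}^{(2)}$ with $\boldsymbol{\Theta}^{(1)}_{k,(ij)}=M^{-1/2}\bigl(g(U_{ki})-g(U_{kj})\bigr)$ and $\boldsymbol{\Theta}^{(2)}_{k,(ij)}=M^{-1/2}h(U_{ki},U_{kj})$ gives
\begin{align}
  \boldsymbol{\tau}=\boldsymbol{\Theta}^{(1)}\boldsymbol{\Theta}^{(1)\top}+\boldsymbol{\Theta}^{(2)}\boldsymbol{\Theta}^{(2)\top}+\boldsymbol{\Theta}^{(1)}\boldsymbol{\Theta}^{(2)\top}+\boldsymbol{\Theta}^{(2)}\boldsymbol{\Theta}^{(1)\top}.
\end{align}

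The linear block is a disguised sample-covariance matrix. Let $\bar{W}$ be the $p\times n$ matrix with entries $U_{ki}-\tfrac12$ (i.i.d., mean $0$, variance $\tfrac1{12}$) and $B$ the signed vertex--edge incidence matrix of the complete graph $K_n$ (the $(i,j)$ column being $e_i-e_j$). Then $\boldsymbol{\Theta}^{(1)}=\tfrac{2}{\sqrt{M}}\,\bar{W}B$, and since $BB^\top=nI_n-\mathbf{1}\mathbf{1}^\top$ is the graph Laplacian of $K_n$,
\begin{align}
  \boldsymbol{\Theta}^{(1)}\boldsymbol{\Theta}^{(1)\top}=\frac{4n}{M}\,\bar{W}\bar{W}^\top-\frac{4}{M}(\bar{W}\mathbf{1})(\bar{W}\mathbf{1})^\top.
\end{align}
The second term has rank one and is invisible to the ESD. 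Because $\tfrac{4n}{M}=\tfrac{8}{n-1}$, the first term equals $\tfrac{n}{n-1}\cdot 8\cdot\bigl(\tfrac1n\bar{W}\bar{W}^\top\bigr)$, and the classical Marčenko--Pastur theorem (entries of $\bar{W}$ i.i.d.\ with variance $\tfrac1{12}$, $p/n\to q$) gives that $\tfrac1n\bar{W}\bar{W}^\top$ has limiting spectral distribution $\tfrac1{12}Y_q$; hence the linear block contributes $8\cdot\tfrac1{12}Y_q=\tfrac23 Y_q$.

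For the degenerate block, the degeneracy of $h$ collapses all the moment bookkeeping. The $(k,l)$ entry of $\boldsymbol{\Theta}^{(2)}\boldsymbol{\Theta}^{(2)\top}$ has mean $\tfrac13\delta_{kl}$ — using $\E[h(U,V)^2]=\E\bigl[1-4|U-V|+4(U-V)^2\bigr]=1-\tfrac43+\tfrac23=\tfrac13$ — while, because $h$ integrates to zero in each argument, two columns of $\boldsymbol{\Theta}^{(2)}$ indexed by distinct pairs are uncorrelated even when the pairs share an index, so the off-diagonal entries of $\boldsymbol{\Theta}^{(2)}\boldsymbol{\Theta}^{(2)\top}$ have second moment $O(1/M)$ and the diagonal entries have variance $O(1/n)$ (bounded-kernel U-statistics). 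Thus $\E\bigl\|\boldsymbol{\Theta}^{(2)}\boldsymbol{\Theta}^{(2)\top}-\tfrac13 I_p\bigr\|_F^2=O(p/n)+O(p^2/M)=O(1)$ in the linear regime; the same degeneracy, through the orthogonality $\E\bigl[(g(U_i)-g(U_j))\,h(U_i,U_j)\bigr]=0$, gives $\E\bigl\|\boldsymbol{\Theta}^{(1)}\boldsymbol{\Theta}^{(2)\top}\bigr\|_F^2=O(1)$, and the rank-one correction above has squared Frobenius norm $O(1)$. Adding these, $\E\bigl\|\boldsymbol{\tau}-\bigl(\tfrac{4n}{M}\bar{W}\bar{W}^\top+\tfrac13 I_p\bigr)\bigr\|_F^2=O(1)$, so the Hoffman--Wielandt inequality yields $\tfrac1p\sum_{i}\bigl|\lambda_i(\boldsymbol{\tau})-\lambda_i\bigl(\tfrac{4n}{M}\bar{W}\bar{W}^\top+\tfrac13 I_p\bigr)\bigr|^2=O(1/p)\to0$ in expectation, hence in probability. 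Therefore $F^{\boldsymbol{\tau}_n}$ has the same weak-in-probability limit as the ESD of $\tfrac{4n}{M}\bar{W}\bar{W}^\top+\tfrac13 I_p$, which by the previous paragraph and continuity of $x\mapsto\tfrac13+\tfrac23 x$ is the law of $\tfrac13+\tfrac23 Y_q$.

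The main obstacle is the degenerate block: one must show that $\boldsymbol{\Theta}^{(2)}\boldsymbol{\Theta}^{(2)\top}$ concentrates on $\tfrac13 I_p$ in a \emph{spectral} sense, even though its $\binom{n}{2}$ columns, although pairwise uncorrelated, are genuinely dependent random vectors (pairs sharing an index involve overlapping uniforms). The degeneracy of $h$ is exactly the mechanism that kills the cross-pair moments, but converting those moment bounds into $o(p)$ control of the squared Frobenius norm — and, if one wishes to upgrade the conclusion to almost-sure weak convergence, carrying out the corresponding fourth-moment or bounded-differences and Borel--Cantelli argument — is where the technical effort lies. Controlling the cross terms $\boldsymbol{\Theta}^{(1)}\boldsymbol{\Theta}^{(2)\top}$ is similarly delicate because $\boldsymbol{\Theta}^{(1)}$ and $\boldsymbol{\Theta}^{(2)}$ are built from the same randomness; here one again relies on the Hoeffding orthogonality between the linear and the degenerate parts.
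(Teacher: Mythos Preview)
The paper does not give its own proof of Theorem~\ref{theo:bandeira}; it quotes it from \citet{bandeira2017marvcenkopastur} and only sketches the mechanism (the Hoeffding decomposition~\eqref{eq:hoeffding}, with the linear part $\mathbf{A}$ producing $(2/3)Y_q$ and the degenerate part $\mathbf{H}$ producing the intercept $1/3$). Your argument is built on exactly that decomposition, so at the level of strategy you are aligned with what the paper describes and with Bandeira et al.

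Where your write-up differs is in the packaging of the perturbation step. You make the reduction to uniforms explicit via the probability integral transform, realise the linear block through the incidence matrix $B$ of $K_n$ (so that $BB^\top=nI_n-\mathbf{1}\mathbf{1}^\top$ immediately reveals the sample covariance $\tfrac{4n}{M}\bar W\bar W^\top$ plus a rank-one correction), and then control the degenerate and cross blocks in squared Frobenius norm, invoking Hoffman--Wielandt to transfer this to the ESD. The moment bookkeeping you state is correct: degeneracy of $h$ kills the cross-pair covariances, giving off-diagonal second moments $O(1/M)$ and diagonal variance $O(1/n)$ (standard U-statistic variance), so $\E\|\boldsymbol{\tau}-(\tfrac{4n}{M}\bar W\bar W^\top+\tfrac13 I_p)\|_F^2=O(1)$ and $\tfrac1p$ times this goes to zero. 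This is a clean route to the weak-in-probability conclusion claimed in the theorem; the paper's description (and Bandeira et al.'s original argument) proceeds via the same decomposition but phrases the comparison through rank and moment/trace arguments rather than a single Frobenius--Hoffman--Wielandt bound. Your last paragraph correctly flags that upgrading to almost-sure convergence would require an additional concentration step; since Theorem~\ref{theo:bandeira} is stated only in probability, that caveat is appropriate and your argument is complete for what is claimed.
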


From Eq.\ (\ref{eq:kendall_def_uni}), we observe that the entries of the Kendall correlation matrix $\boldsymbol{\tau}$ are in fact scalar products in dimension $\binom{n}{2} = \mathcal{O}(n^2)$. Henceforth we therefore assume the quadratic scaling
\begin{align}\label{eq:assumption}
  p = p(n), \qquad q'_n := \frac{2p}{n(n-1)} \rightarrow q' > 0, \qquad \mathrm{if} ~ n \rightarrow \infty,
\end{align}
for some positive constant $q'$.

\begin{theorem}\label{theo:kendall_quad}
  Consider a data matrix $\mathbf{X}$ as defined previously. Suppose that assumption (\ref{eq:assumption}) holds. Then, the empirical spectral distribution of $\boldsymbol{\tau}$ converges weakly almost surely to the distribution $(1/3) Y_{q'}$ where $Y_{q'}$ has a MP law with parameter $q' > 0$.
\end{theorem}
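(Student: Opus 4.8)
The plan is to linearize the sign variables via a Hoeffding decomposition — as in \citet{bandeira2017marvcenkopastur} — and then to exploit the fact that in the quadratic regime the two resulting components play opposite roles relative to the linear regime. Since $\boldsymbol{\tau}$ depends on the data only through componentwise ranks, I may assume each $x_{ki}$ is uniform on $[0,1]$. Set $w_{ki} := 2x_{ki}-1$ (so $\E w_{ki}=0$, $\E w_{ki}^2=\tfrac13$) and $h_{k,(ij)} := v_{k,(ij)} - w_{ki} + w_{kj}$; a direct check gives $\E[h_{k,(ij)}\mid x_{ki}] = \E[h_{k,(ij)}\mid x_{kj}] = 0$, i.e.\ $h_{k,(ij)}$ is the completely degenerate part of the kernel $\mathrm{sign}(\cdot-\cdot)$. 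Writing $\mathbf{w}_i := (w_{1i},\dots,w_{pi})^\top$ and $\mathbf{h}_{(ij)} := (h_{1,(ij)},\dots,h_{p,(ij)})^\top$, one has $\boldsymbol{\theta}_{(ij)} = M^{-1/2}(\mathbf{w}_i - \mathbf{w}_j + \mathbf{h}_{(ij)})$, hence $\boldsymbol{\tau} = \boldsymbol{\tau}_{\mathrm L} + \boldsymbol{\tau}_{\mathrm D} + \boldsymbol{\tau}_{\mathrm C}$, where $\boldsymbol{\tau}_{\mathrm L} := M^{-1}\sum_{i<j}(\mathbf{w}_i-\mathbf{w}_j)(\mathbf{w}_i-\mathbf{w}_j)^\top$, $\boldsymbol{\tau}_{\mathrm D} := M^{-1}\mathbf{H}\mathbf{H}^\top$ with $\mathbf{H}$ the $p\times M$ matrix of columns $\mathbf{h}_{(ij)}$, and $\boldsymbol{\tau}_{\mathrm C}$ collects the two cross terms.

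First I would dispose of $\boldsymbol{\tau}_{\mathrm L}$ and $\boldsymbol{\tau}_{\mathrm C}$ by a rank argument: every vector $\mathbf{w}_i-\mathbf{w}_j$ lies in $\mathrm{span}\{\mathbf{w}_1,\dots,\mathbf{w}_n\}$, so $\mathrm{rank}\,\boldsymbol{\tau}_{\mathrm L}\le n$, and writing $\boldsymbol{\tau}_{\mathrm C} = M^{-1}(\mathbf{D}\mathbf{H}^\top + \mathbf{H}\mathbf{D}^\top)$ with $\mathbf{D}$ the array of columns $\mathbf{w}_i-\mathbf{w}_j$ gives $\mathrm{rank}\,\boldsymbol{\tau}_{\mathrm C}\le 2n$. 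Since $3n/p\to 0$ under assumption (\ref{eq:assumption}), the rank inequality for empirical spectral distributions yields $\|F^{\boldsymbol{\tau}} - F^{\boldsymbol{\tau}_{\mathrm D}}\|_\infty \le 3n/p \to 0$, so it suffices to find the LSD of $\boldsymbol{\tau}_{\mathrm D}$. This is precisely the mirror image of Theorem~\ref{theo:bandeira}: in the linear regime $p/M\to 0$, so $\boldsymbol{\tau}_{\mathrm D}$ concentrates to $\tfrac13 I_p$ and the whole Marčenko–Pastur content comes from $\boldsymbol{\tau}_{\mathrm L}$; in the quadratic regime $n/p\to 0$ makes $\boldsymbol{\tau}_{\mathrm L}$ and $\boldsymbol{\tau}_{\mathrm C}$ negligible and the content comes from $\boldsymbol{\tau}_{\mathrm D}$.

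Next I would recognize $\boldsymbol{\tau}_{\mathrm D} = M^{-1}\mathbf{H}\mathbf{H}^\top$ as a generalized sample‑covariance matrix. Its $k$-th row $\mathbf{h}^{(k)} := (h_{k,(ij)})_{i<j}\in\mathbb{R}^M$ is a function of the $k$-th row of $\mathbf{X}$ only, so the vectors $\mathbf{h}^{(1)},\dots,\mathbf{h}^{(p)}$ are i.i.d., bounded in sup‑norm by $3$, with mean $0$; moreover independence of the components of $\mathbf{x}$ together with degeneracy gives $\E[h_{k,(ij)}h_{k,(i\ell)}] = \E\big[\E[h_{k,(ij)}\mid x_{ki}]\,\E[h_{k,(i\ell)}\mid x_{ki}]\big] = 0$ for $j\neq\ell$, so $\Cov(\mathbf{h}^{(k)}) = \tfrac13 I_M$ exactly. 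Hence $\boldsymbol{\tau}_{\mathrm D}$ shares its nonzero spectrum — up to the scalar $p/M\to q'$ and a point mass at $0$ of the appropriate weight — with the sample covariance matrix $p^{-1}\sum_{k=1}^p\mathbf{h}^{(k)}(\mathbf{h}^{(k)})^\top$ of $p$ i.i.d.\ samples in dimension $M$ with $M/p\to 1/q'$ and population covariance $\tfrac13 I_M$. Invoking a Marčenko–Pastur theorem valid when the i.i.d.\ samples have dependent but uncorrelated coordinates (a Bai–Zhou‑type statement), whose hypothesis I verify below, this sample covariance has LSD $\tfrac13 Y_{1/q'}$; a straightforward rescaling of the aspect ratio and bookkeeping of the atom at $0$ (the standard relation between the Marčenko–Pastur laws of $A^\top A$ and $AA^\top$) then gives $F^{\boldsymbol{\tau}_{\mathrm D}}\Rightarrow \tfrac13 Y_{q'}$. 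Convergence holds first in probability, and — since the $h$'s are bounded, so that a martingale (Efron–Stein) estimate over the $p$ independent rows gives a bound on the variance of the Stieltjes transform that is summable along the quadratic scaling $p\sim q'n^2/2$ — almost surely as well.

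The hard part will be verifying the hypothesis of the generalized Marčenko–Pastur theorem, a quadratic‑form concentration of the form $\sup_{\|B\|\le 1} M^{-2}\,\E\big|(\mathbf{h}^{(k)})^\top B\,\mathbf{h}^{(k)} - \tfrac13\mathrm{tr}\,B\big|^2 \to 0$ over deterministic Hermitian $B$; this is where the degenerate U‑statistic structure is essential. Expanding the square produces the fourth‑order array $\E[h_a h_b h_c h_d]$ over pairs $a,b,c,d$, and degeneracy forces any term in which some vertex belongs to exactly one of the four pairs to vanish (condition on that vertex). Only two kinds of terms survive: the ``two‑pairs'' contractions, which after cancelling $(\tfrac13\mathrm{tr}\,B)^2$ reproduce $\tfrac29\|B\|_F^2 = O(M)$; and the genuinely dependent configurations, whose pair‑multiset is supported on a triangle or a $4$‑cycle of mutually intersecting pairs — for these one sums over $O(n^3)$, resp.\ $O(n^4)$, vertex choices but, applying Cauchy–Schwarz in the shared vertex together with the elementary bound $\sum_{e\cap f\neq\varnothing}B_{ef}^2\le \|B\|_F^2\le M$, each contributes only $O(n^3)=o(M^2)$. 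Organizing this estimate — in particular for the $4$‑cycle term, where the ``middle contraction'' $(v,v')\mapsto\sum_u B_{\{v,u\},\{u,v'\}}$ of $B$ appears naturally — is the technical core, and is the single‑vector analogue of the full‑matrix estimates of \citet{bandeira2017marvcenkopastur}. (An equivalent route is to compute $\lim p^{-1}\E\,\mathrm{tr}\,\boldsymbol{\tau}_{\mathrm D}^{\,m}$ directly and match it with the moments of $\tfrac13 Y_{q'}$, the same degeneracy argument showing that all non‑planar contractions are negligible.)
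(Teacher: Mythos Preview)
Your reduction step is essentially the paper's Proposition~\ref{prop:prop1}: the Hoeffding decomposition followed by the rank bound $\mathrm{rank}(\boldsymbol{\tau}-\boldsymbol{\tau}_{\mathrm D})\le O(n)=o(p)$ is exactly what the paper does (with the same $5n/p$‑type estimate). So the first half matches.

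For the LSD of $\boldsymbol{\tau}_{\mathrm D}$ you take a genuinely different route. The paper runs the Schur‑complement/cavity argument directly on $\mathbf{H}$: it peels off one \emph{feature} (row of $\mathbf{X}$), uses that $\tilde{\mathbf{H}}$ and hence $\mathbf{G}_{\tilde{\mathbf H}}$ depend only on $\tilde{\mathbf X}$, and computes the single scalar $\mathbb{E}\big[\sum_{a,b}\mathbf{H}_{1a}[\mathbf{G}_{\tilde{\mathbf H}}]_{ab}\mathbf{H}_{b1}\big]=\tfrac{1}{3}\big(-q''+q''z\,\mathbb{E}[\mathfrak g^{p-1}(z)]\big)$ in three lines; for the almost‑sure part it uses a martingale over the $n$ \emph{samples}, where removing one $\mathbf{x}_k$ is a rank‑$(n{-}1)$ perturbation, forcing a higher‑moment Burkholder bound. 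You instead pass to the $M\times M$ companion, view it as the sample covariance of the $p$ i.i.d.\ feature vectors $\mathbf{h}^{(k)}$ with population covariance $\tfrac13 I_M$, and invoke a Bai--Zhou‑type theorem; the price is verifying the quadratic‑form concentration, which you correctly identify as a fourth‑moment combinatorial estimate whose nontrivial cases (triangle and $4$‑cycle) are killed by degeneracy of $h$ and the bound $\|B\|_F^2\le M$. Both arguments hinge on the same structural fact---independence across features---but the paper's cavity computation is shorter and self‑contained, while your route is more modular (it isolates exactly which concentration inequality is needed) and yields a cleaner almost‑sure upgrade, since your feature‑wise martingale has rank‑one increments and variance $O(1/p)$, summable in $n$ without the $m>4$ trick.
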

Intuitively, Theorem \ref{theo:bandeira} tells us that the matrix $\boldsymbol{\tau}$ behaves like $(2/3)\mathbf{W}(q) + (1/3)\mathbf{I}_p$, where $\mathbf{W}(q)$ is a $p\times p$ Wishart matrix of parameter $q$, when $n$ and $p$ are sufficiently large while $q = p/n$ remains of order $1$. In the quadratic regime, we have far fewer data points: $n$ is of order $\sqrt{p}$, and thus $q$ becomes very large. According to Def. \ref{definition:mp_law}, the matrix $\mathbf{W}(q)$ then consists mostly of eigenvalues equal to $0$, but approximately $1/q$ of the eigenvalues remain in the bulk, located between $\lambda_{-}$ and $\lambda_{+}$, and are of order $q$. The convergence in law does not capture these eigenvalues that escape to infinity, and one may then think of $\mathbf{W}(q)$ as behaving like the zero matrix when $q \to +\infty$. Our main theorem explains that the term $(1/3)\mathbf{I}_p$ will spread out in the quadratic regime and itself assume a MP distribution. We can informally state that $\boldsymbol{\tau} \approx (1/3)\mathbf{W}\left(\frac{2p}{n(n-1)}\right) + (2/3)\mathbf{W}\left(p/n\right)$ when $n$ and $p$ are large. In the linear regime, i.e. when $n$ and $p$ are of the same order, the term $\mathbf{W}\left(\frac{2p}{n(n-1)}\right)$ behaves like the identity matrix, and we recover Theorem \ref{theo:bandeira}. Conversely, in the quadratic regime, $\mathbf{W}\left(p/n\right)$ disappears as explained above, and we are left only with $(1/3)\mathbf{W}\left(\frac{2p}{n(n-1)}\right)$.

However, we must be cautious with such reasoning. We emphasize that it is not rigorous because, for instance, claiming that $\boldsymbol{\tau} \approx (1/3)\mathbf{W}\left(\frac{2p}{n(n-1)}\right)$ in the quadratic regime leads to issues, such as with the trace. On the left-hand side, we have an object whose first moment is $1$, while on the right-hand side, it is $(1/3)$. The issue arises from the very large eigenvalues of $(2/3)\mathbf{W}\left(p/n\right)$ that escape to infinity. To overcome this, we will first rely on the Hoeffding decomposition and then apply random matrix theory methods to the leading-order term.

This will be the focus of the next section. The two most well-known approaches to prove the Marčenko-Pastur theorem (as presented in \citep{bai2010spectral}) are: one based on the moment method and the other using the cavity method, whose general principles are detailed in the appendix (see Appendix \ref{RMT}). To prove the main theorem, it suffices to adapt these two classical proofs. In this article, we have chosen to focus solely on the cavity method, with its key ideas revisited in the next section. Section IV will then present the numerical results.

\FloatBarrier

\section{Proof of Theorem \ref{theo:kendall_quad}}

A key point of the proof strategy used by \citep{bandeira2017marvcenkopastur} is the following Hoeffding decomposition \citep{hoeffding}
\begin{align}\label{eq:hoeffding}
  v_{k,(ij)} = u_{k,(i\cdot)} + u_{k,(\cdot j)} + \bar v_{k,(ij)}
\end{align}
where
\begin{align}
   & u_{k,(i\cdot)} = \mathbb{E}[\mathrm{sign}(x_{ki} - x_{kj}) | x_{ki}],
   & u_{k,(\cdot j)} = \mathbb{E}[\mathrm{sign}(x_{ki} - x_{kj}) | x_{kj}]
\end{align}
and Eq.\ (\ref{eq:hoeffding}) defines $\bar v_{k,(ij)}$. Note that the three terms on the right hand-side of Eq.\ (\ref{eq:hoeffding}) are centered and uncorrelated. Since the sign function is odd, we can rewrite
\begin{align}\label{eq:hoeffding_sign}
  v_{k,(ij)} = u_{k,(i\cdot)} - u_{k,(j \cdot)} + \bar v_{k,(ij)}
\end{align}
By introducing
\begin{align}
  \boldsymbol{\bar{\Theta}}_{(ij)} := \frac{1}{\sqrt{M}} \left( \bar v_{1,(ij)}, \ldots, \bar v_{p,(ij)} \right)^\top
\end{align}
we can extend Eq.\ (\ref{eq:hoeffding}) to the multivariate case and write
\begin{align}\label{eq:decomp_linear}
  \boldsymbol{\theta}_{(ij)} \boldsymbol{\theta}_{(ij)}^\top = \boldsymbol{\bar{\Theta}}_{(ij)} \boldsymbol{\bar{\Theta}}_{(ij)} ^\top + \mathbf{A}_{(ij)}
\end{align}
which defines the term $\mathbf{A}_{(ij)}$. In the linear scaling $p = \mathcal{O}(n)$, \citep{bandeira2017marvcenkopastur} show that the term $\mathbf{A} = \sum_{i<j} \mathbf{A}_{(ij)}$ accounts for the MP law $(2/3) Y_{q}$ of Theorem \ref{theo:bandeira}, while the term
\begin{align}\label{eq:defH}
  \mathbf{H} = \mathbf{H}_n := \sum_{i<j} \boldsymbol{\bar{\Theta}}_{(ij)} \boldsymbol{\bar{\Theta}}_{(ij)} ^\top = \boldsymbol{\bar \Theta} \boldsymbol{\bar \Theta}^\top
\end{align}
accounts for the intercept $(1/3)$. 

To prove Theorem \ref{theo:kendall_quad} in the quadratic regime, the first step is to prove that $\boldsymbol{\tau}$ and $\mathbf{H}$ have the same LSD. This is the goal of the following proposition proved in Appendix \ref{pro}.

\begin{proposition}\label{prop:prop1}
    $\|F^{\boldsymbol{\tau}} - F^{\mathbf{H}} \|_{\infty} \underset{n \to \infty}{\longrightarrow} 0$.
\end{proposition}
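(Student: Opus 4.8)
The plan is to exploit the fact that $\boldsymbol{\tau}$ and $\mathbf{H}$ are both of the form $PP^\top$ for matrices $P$ that differ by a factor of rank $O(n)$, and then to invoke the rank inequality for empirical spectral distributions. Concretely, recast the Hoeffding decomposition (\ref{eq:hoeffding_sign}) in matrix form: set $\mathbf{u}_i := \tfrac{1}{\sqrt{M}}(u_{1,(i\cdot)},\dots,u_{p,(i\cdot)})^\top$, collect these into $U := (\mathbf{u}_1,\dots,\mathbf{u}_n)\in\mathbb{R}^{p\times n}$, and observe that each column of $\boldsymbol{\Theta}$ reads $\boldsymbol{\theta}_{(ij)} = \boldsymbol{\bar\Theta}_{(ij)} + \mathbf{u}_i - \mathbf{u}_j = \boldsymbol{\bar\Theta}_{(ij)} + U(\mathbf{e}_i-\mathbf{e}_j)$. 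Hence $\boldsymbol{\Theta} = \boldsymbol{\bar\Theta} + UC$, where $C\in\mathbb{R}^{\,n\times\binom n2}$ is the (oriented) incidence matrix of the complete graph $K_n$, whose $(ij)$-th column is $\mathbf{e}_i-\mathbf{e}_j$. In particular $\operatorname{rank}(\boldsymbol{\Theta}-\boldsymbol{\bar\Theta})=\operatorname{rank}(UC)\le\operatorname{rank}(U)\le n$.

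The conclusion is then immediate from the rank inequality (e.g.\ Theorem~A.43 in \citet{bai2010spectral}): for $p\times m$ real matrices $P,Q$ one has $\|F^{PP^\top}-F^{QQ^\top}\|_\infty\le \tfrac1p\operatorname{rank}(P-Q)$. Taking $P=\boldsymbol{\Theta}$ and $Q=\boldsymbol{\bar\Theta}$, so that $PP^\top=\boldsymbol{\tau}$ and $QQ^\top=\mathbf{H}$, gives
\[
  \|F^{\boldsymbol{\tau}_n}-F^{\mathbf{H}_n}\|_\infty\le\frac{n}{p}\sim\frac{2}{q' n}\;\underset{n\to\infty}{\longrightarrow}\;0 .
\]
The bound is deterministic, so no probabilistic input is needed; equivalently one may expand $\boldsymbol{\tau}=\mathbf{H}+\boldsymbol{\bar\Theta}C^\top U^\top+UC\boldsymbol{\bar\Theta}^\top+UCC^\top U^\top$ and read off directly that $\boldsymbol{\tau}-\mathbf{H}$ has rank at most $3n$.

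The one point worth stressing --- and essentially the only place any thought is required --- is that $\mathbf{A}:=\boldsymbol{\tau}-\mathbf{H}$ is \emph{not} small in operator or Hilbert--Schmidt norm: one has $\operatorname{Tr}\mathbf{A}\sim 2p/3$, and in the linear regime of Theorem~\ref{theo:bandeira} this very term produces the entire Mar\v{c}enko-Pastur bulk $(2/3)Y_q$. What rescues the argument in the quadratic regime is purely a matter of counting: the whole spectral weight of $\mathbf{A}$ is borne by $O(n)=o(p)$ eigenvalues which, although they diverge, are invisible to weak convergence. So the crux is the algebraic observation that the $\mathbf{u}$-part of the Hoeffding decomposition lies in an $n$-dimensional subspace; granting that, the rank inequality does the rest. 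I do not anticipate a genuine obstacle at this stage --- the real difficulty of the paper lies downstream, in identifying the LSD of $\mathbf{H}$ itself by the cavity method.
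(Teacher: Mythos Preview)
Your argument is correct and essentially the same as the paper's: both use the Hoeffding decomposition to see that the $U$-part lives in an $n$-dimensional subspace and then invoke a rank inequality from \citet{bai2010spectral} to conclude $\|F^{\boldsymbol{\tau}}-F^{\mathbf{H}}\|_\infty=O(n/p)\to 0$. The only cosmetic difference is that the paper bounds $\operatorname{rank}(\boldsymbol{\tau}-\mathbf{H})\le 5n$ directly via the Hermitian version (Theorem~A.43), whereas you bound $\operatorname{rank}(\boldsymbol{\Theta}-\boldsymbol{\bar\Theta})\le n$ through the factorization $UC$ and apply the $PP^\top$ form of the inequality, which yields a slightly tighter constant.
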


The second part of the proof involves studying the LSD of \(\mathbf{H}\). One can adapt the method of moments used in the proof of the Marčenko-Pastur law and note that dependent pairs can be ignored. Here, we present the cavity method for brevity. We encourage readers to refer to the recap of the proof of the Marčenko-Pastur theorem using this method in Appendix~\ref{RMT}. We recall that there are two steps. The first step consists of showing that
\begin{equation}\label{step1}
    \g^p(z) - \E(\g^p(z)) \underset{p \to +\infty}{\longrightarrow} 0
\end{equation}
almost surely, where \(\g^p(z)\) denotes the empirical Stieltjes transform of a realization of the matrix \(\mathbf{H}\) of size \(p \times p\), which we will prove in Appendix~\ref{pro3}.

The second step consists of showing that \(\E(\g^p(z))\) converges to \(\g_{\rho}(z)\), the Stieltjes transform associated with \(\rho\), the distribution of \(Y_{q'}/3\). It suffices to adapt the outline of the proof presented in Appendix~\ref{RMT} by considering the matrix $\mathbf{H}$, defined in Eq.\ (\ref{eq:defH}), instead of a Wishart matrix. If we establish the following proposition, the corresponding proof of \citep{bai2010spectral} for Wishart matrices proceeds in the same way, as \(\Tilde{\mathbf{H}}\) corresponds to a Kendall matrix of size \((p\!-\!1) \times (p\!-\!1)\) associated with a sample of size \( n \).

\begin{proposition}
    \begin{equation}\label{key}
C := \mathbb{E}\left[\sum_{2 \leq a,b \leq p} \mathbf{H}_{1a}\,\left[\mathbf{G}_{\tilde{\mathbf{H}}}(z)\right]_{ab}\,\mathbf{H}_{b1}\right]
= \frac{-q'' + q''\,z\, \mathbb{E}[\g^{p-1}(z)]}{3}
\end{equation}where $q'' = \frac{2(p-1)}{n(n-1)}$, $\tilde{\mathbf{H}}$ denotes the matrix $\mathbf{H}$ with its first row and first column removed, $\mathbb{E}[\g^{p-1}(z)]$ is the expected value of the Stieltjes transform of the random empirical spectral distribution corresponding to a realization of the $(p-1) \times (p-1)$ matrix $\tilde{\mathbf{H}}$, and $\mathbf{G}_{\tilde{\mathbf{H}}}(z)$ is the resolvent matrix associated with $\tilde{\mathbf{H}}$. 
\end{proposition}
Let us now prove Eq.\ \eqref{key}. From the definition of $\mathbf{H}$, we have:

\begin{multline}
    C = \left(\frac{2}{n(n-1)}\right)^2 \sum_{2 \leq a, b \leq p} \sum_{1 \leq i_1 < j_1 \leq n} \sum_{1 \leq i_2 < j_2 \leq n} \mathbb{E}\left[\bar{v}_{1,(i_1j_1)} \bar{v}_{a,(i_1j_1)} \bar{v}_{1,(i_2j_2)} \bar{v}_{b,(i_2j_2)} [\mathbf{G}_{\Tilde{\mathbf{H}}}(z)]_{ab}\right].
\end{multline}
We now observe that $[\mathbf{G}_{\Tilde{\mathbf{H}}}(z)]_{ab}$ only depends on $\Tilde{\mathbf{X}}$, which is the matrix $\mathbf{X}$ with its first row removed. We can therefore write, by independence:

\begin{multline}
    \mathbb{E}\left[\bar{v}_{1,(i_1j_1)} \bar{v}_{a,(i_1j_1)} \bar{v}_{1,(i_2j_2)} \bar{v}_{b,(i_2j_2)} [\mathbf{G}_{\Tilde{\mathbf{H}}}(z)]_{ab}\right] 
    = \mathbb{E}\left[\mathbb{E}\left[\bar{v}_{1,(i_1j_1)} \bar{v}_{1,(i_2j_2)}\right] \bar{v}_{a,(i_1j_1)} \bar{v}_{b,(i_2j_2)} [\mathbf{G}_{\Tilde{\mathbf{H}}}(z)]_{ab}\right].
\end{multline}
One can check that
\begin{equation}
    \mathbb{E}\left[\bar{v}_{1,(i_1j_1)} \bar{v}_{1,(i_2j_2)}\right] =
    \begin{cases} 
        0 & \text{if } (i_1, j_1) \neq (i_2, j_2), \\
        (1/3) & \text{otherwise},
    \end{cases}
\end{equation}and so express \( C \) as
\begin{equation}
    C = (1/3) \left(\frac{2}{n(n-1)}\right)^2 \sum_{2 \leq a, b \leq p} \sum_{1 \leq i < j \leq n} \mathbb{E}\left[\bar{v}_{a,(ij)} \bar{v}_{b,(ij)} [\mathbf{G}_{\Tilde{\mathbf{H}}}(z)]_{ab}\right],
\end{equation}
which simplifies to
\begin{equation}
    (1/3)\frac{2}{n(n-1)} \mathbb{E} \left[ \tr\left(\Tilde{\mathbf{H}} \mathbf{G}_{\Tilde{\mathbf{H}}}(z)\right) \right].
\end{equation}
Finally, this can be rewritten as
\begin{equation}
    \frac{-q'' + q''z \mathbb{E}\left[\g^{p-1}(z)\right]}{3},
\end{equation}
where \( q'' = \frac{2(p-1)}{n(n-1)} \), and \(\Tilde{\mathbf{H}}\) corresponds to a Kendall matrix of size \((p-1) \times (p-1)\) associated with a sample of size \( n \). We have thus proven Equation \eqref{key}, which completes the proof of the result.

Furthermore, in the case where the components of the random vector $\mathbf{x}\in \mathbb{R}^p$ are i.i.d., the attentive reader will have noticed that the entire proof holds when replacing the definition of $v_{k, (ij)}$ with
\begin{equation}
    v_{k, (ij)} = \phi(x_{ki}, x_{kj}),
\end{equation}
where $\phi: \mathbb{R}^2 \to \mathbb{R}$ is a bounded and antisymmetric function. One would then obtain that, in the same quadratic regime, the limiting spectral distribution of the new matrix $\boldsymbol{\tau}$ converges almost surely to the distribution $\alpha Y_{q'}$, where $Y_{q'}$ is a random variable distributed according to the MP law with parameter $q'$, and

\begin{equation}\label{eq:gcc}
    \alpha := \mathbb{E}\left[(\phi(\mathbf{x}_1, \mathbf{x}_2) - \mathbb{E}[\phi(\mathbf{x}_1, \mathbf{x}_2) | \mathbf{x}_1] - \mathbb{E}[\phi(\mathbf{x}_1, \mathbf{x}_2) | \mathbf{x}_2])^2\right]
\end{equation}where $\mathbf{x}_1$ and $\mathbf{x}_2$ denote the first two random coordinates of the vector $\mathbf{x}$. A classical example of antisymmetric function is $\phi(x,y)\!=\!\psi(y-x)$ for $\psi:\mathbb{R}\to\mathbb{R}$ odd \citep{daniels}. When $\psi(x) = \text{sgn}(x)$, we recover Kendall's correlation coefficient, whereas $\psi(x)\!=\!x$ corresponds to Pearson's correlation coefficient (up to an appropriate normalization). In the former case, the correlation coefficient is independent of the marginal distributions of the random variables, while in the latter it is highly sensitive to them. This motivates the exploration of \textit{intermediate} kernels of the form $\psi(x)\!=\!\tanh(x)$, which aim to strike a balance between these two regimes. However, the parameter $\alpha$ in Eq.~(\ref{eq:gcc}) becomes data-dependent and is no longer universal, as it is for Kendall and Pearson coefficients \citep{benaych2025spectral}.

\section{Numerics}

In this section, we plot and discuss the histogram of the eigenvalues of Kendall's matrix to validate our theoretical results. Fig.\ \ref{fig:hist_H} shows the histogram of the empirical eigenvalues of $\mathbf{H}$ against the theoretical formula derived in Theorem \ref{theo:kendall_quad}. The empirical eigenvalues of the Kendall matrix exhibit a more intriguing phenomenon. Indeed, we see that both the eigenvalues of the linear MP $(2/3)\mathbf{W}\left(p/n\right)$ and of the quadratic MP $(1/3)\mathbf{W}\left(\frac{2p}{n(n-1)}\right)$ appear. Fig.\ \ref{fig:eigval_kendall} shows how the eigenvalues of the linear MP increase and how the bulk of eigenvalues of the quadratic MP spread around their average value $(1/3)$, as $p$ increases. Heuristically, the linear eigenvalues do not affect the Kendall spectrum since their distribution converges to a Dirac mass at zero and an infinite eigenvalue. This infinite eigenvalue scales at $(1+\sqrt{q})^2 \sim q$ with probability $\frac{1}{q}$ explaining why the first moment of the Kendall matrix exists (equal to 1) but not its other moments. The Python code used for simulations is available at \url{https://github.com/espanato/kendall_eigenvalues}.     

\begin{figure}[htbp]
\centering
\includegraphics[width=0.5\linewidth]{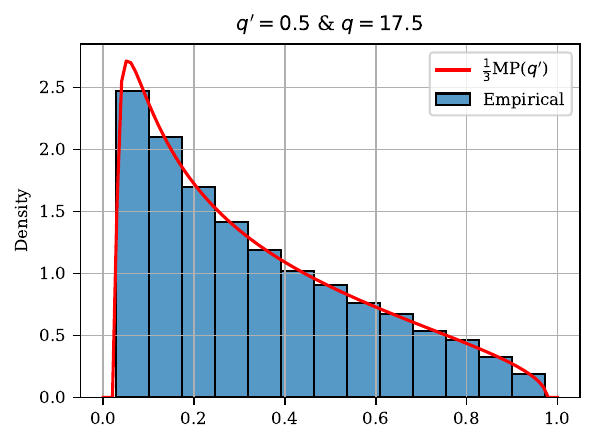}
\caption{\justifying Histogram of the eigenvalues of $\mathbf{H}$ for $n\!=\!70$ and $p\!=\!1225$ (${q'}\!\approx\!0.5$ and $q\!\approx 17.5\!$). The superimposed red line is the density function of $(1/3)Y_{q'}$.}
\label{fig:hist_H}
\end{figure}

\begin{figure}[ht]
  \centering
  \includegraphics[width=0.6\linewidth]{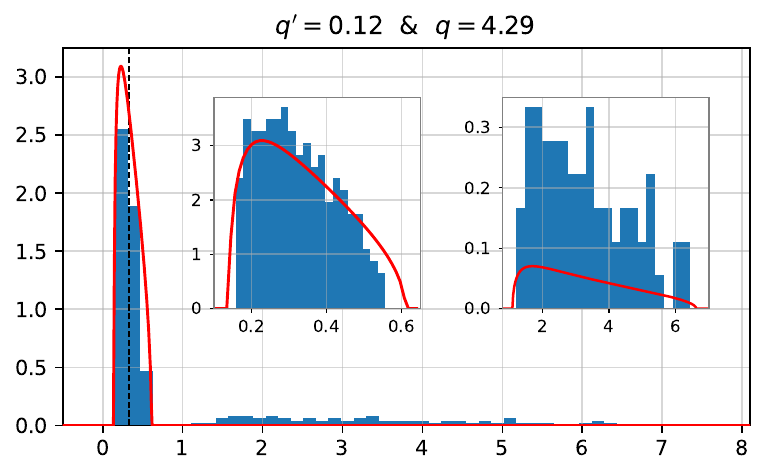}
  \vspace{0.8em}
  \includegraphics[width=0.6\linewidth]{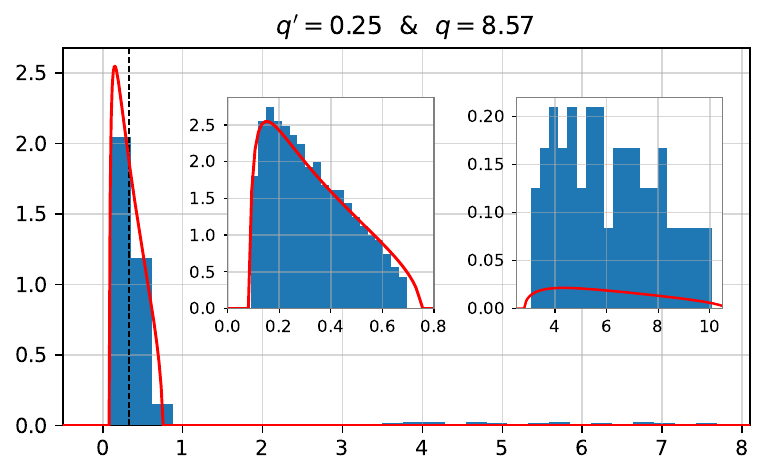}
  \caption{\justifying Histogram of the eigenvalues of $\boldsymbol{\tau}$ for $n=70$ (top: $p=300$, bottom: $p=600$). 
  The superimposed red line is the density of $(1/3)Y_{q'}$. The vertical black dashed line marks $(1/3)$. The two inset plots show zoomed-in views of the eigenvalue distribution around selected regions. The left  inset plot shows the 'quadratic' eigenvalues and superimposes the density of $(1/3)Y_{q'}$. The right inset plot shows the 'linear' eigenvalues and superimposes the density of $(2/3)Y_{q}\!+\!1/3$.}
  \label{fig:eigval_kendall}
\end{figure}

\FloatBarrier
\begin{acknowledgments}
\vspace{-0.3cm}
We thank Florent Benaych-Georges, Jean-Philippe Bouchaud, Victor Le Coz and Afonso Bandeira, who contributed to our research through fruitful discussions.
This research was conducted within the Econophysics \& Complex Systems Research Chair, under the aegis of the Fondation du Risque, the Fondation de l'Ecole Polytechnique, the Ecole Polytechnique and Capital Fund Management.
\end{acknowledgments}

\FloatBarrier
\newpage
\appendix
\section{RMT Reminders}\label{RMT}

\begin{definition}\label{definition:mp_law}
  The Marčenko-Pastur law $\rho$ of parameter $q > 0$ is the probability distribution
  \begin{equation*}
    \rho(\lambda) = \frac{1}{2 \pi q \lambda} \sqrt{[(\lambda_+ - \lambda)(\lambda - \lambda_-)]_{+}} + (1 - \frac{1}{q})\delta(\lambda) \Theta(q-1)
  \end{equation*}
  where we denote $\lambda_{+} = (1 + \sqrt{q})^2, \, \lambda_{-} = (1 - \sqrt{q})^2, \, [a]_{+} := \mathrm{max}(0, a)$ for any $a \in \mathbb{R}$ and \newline
  $\Theta(q-1) := \begin{cases}
      0 \,\,  \text{if} \,\,\, q \leq 1 \\
      1 \,\, \text{if} \,\,\, q > 1
    \end{cases}$.
\end{definition}

A convenient way to study the eigenvalues of a random matrix $\mathbf{W}$ of size $p \times p$ is to look at the resolvent matrix

\begin{equation}
    \mathbf{G}_{\mathbf{W}}(z) = (z - \mathbf{W})^{-1},
\end{equation}
where $z$ is a complex number outside the spectrum of $\mathbf{W}$, especially its normalized trace 

\begin{equation}
    \g_{\mathbf{W}}^{p}(z) := \frac{1}{p} \tr(\mathbf{G}_{\mathbf{W}}(z)).
\end{equation}
This can be written as 

\begin{equation}
    \g_{\mathbf{W}}^{p}(z) = \int \frac{\rho^p (\lambda)}{z - \lambda} \, d\lambda,
\end{equation}
where $\rho^p$ is the empirical spectral distribution of $\mathbf{W}$.
The Stieltjes transform of a given distribution $\rho$ is the function $\g_{\rho}$ defined as

\begin{equation}
    \g_{\rho}(z) = \int \frac{\rho(\lambda)}{z - \lambda} \, d\lambda
\end{equation}where $z$ is a complex number outside the support of $\rho$. 

One way to prove the almost sure weak convergence of the empirical spectral distribution of $\mathbf{W}$ towards the Marčenko-Pastur distribution with parameter $q$ is to show that

\begin{equation}\label{step2}
    \mathbb{E}[\g_{\mathbf{W}}^{p}(z)] \underset{p \to +\infty}{\longrightarrow} \g_{\rho}(z),
\end{equation}
where $\rho$ is the Marčenko-Pastur distribution with parameter $q$. 

However, this result is not sufficient to guarantee the almost sure weak convergence of the empirical spectral distribution. We must also demonstrate that  
\begin{equation}\label{step1}
    \g_{\mathbf{W}}^{p}(z) \underset{p \to +\infty}{\longrightarrow} \mathbb{E}[\g_{\mathbf{W}}^{p}(z)]
\end{equation}
almost surely.

This approach is known as the method of the Stieltjes transform. A rigorous proof of the limiting spectral distribution of a $p \times p$ Wishart matrix $\mathbf{W}$ with parameter $p/n$ using this method can be found in \citep{bai2010spectral}, which refers to Eq.~\eqref{step1} as Step 1 and Eq.~\eqref{step2} as Step 2. A key part of Step 2 is to prove that:

\begin{align}
    \mathbb{E}\left[\sum_{2\leq a, b \leq p} [\mathbf{W}]_{1a} [\mathbf{G}_{\Tilde{\mathbf{W}}}(z)]_{ab} [\mathbf{W}]_{b1}\right] = -\frac{p-1}{n} + \frac{p-1}{n}z \mathbb{E}[\g^{p-1}(z)],
\end{align}
where $\mathbf{G}_{\Tilde{\mathbf{W}}}(z)$ is the resolvent matrix of size $(p-1) \times (p-1)$ associated with $\Tilde{\mathbf{W}}$, which is the matrix $\mathbf{W}$ with its first row and first column removed. Here, $\g^{p-1}(z)$ is the random Stieltjes transform of the empirical distribution of $\Tilde{\mathbf{W}}$, which is still a $(p-1)\times (p-1)$ Wishart matrix with parameter $(p-1)/n$. The rest of the proof involves using the cavity method.

\section{Proof of Proposition \ref{prop:prop1}}\label{pro}

\noindent\textbf{Proposition \ref{prop:prop1}.} \itshape
$\|F^{\boldsymbol{\tau}} - F^{\mathbf{H}}\|_{\infty} \underset{n \to \infty}{\longrightarrow} 0$.
\normalfont

\begin{proof}
Using capital letters to extend Eq.\ (\ref{eq:hoeffding_sign}) to $\mathbb{R}^p$ vectors gives
\begin{align}
    V_{(ij)} = U_i - U_j + \bar V_{(ij)}
\end{align}
From Eq.\ (\ref{eq:decomp_linear}), we can readily show that

\begin{align}
    \mathbf{A}_{(ij)} = \mathbf{A}_{(ij)}^{(1)} + \mathbf{A}_{(ij)}^{(2)} + \left( \mathbf{A}_{(ij)}^{(2)} \right)^\top 
\end{align}
where
\begin{align}
    & \mathbf{A}_{(ij)}^{(1)} := \frac{1}{M} \left(U_i - U_j\right) \left(U_i - U_j \right)^\top \\
    & \mathbf{A}_{(ij)}^{(2)} := \frac{1}{M} \bar V_{(ij)} \left(U_i - U_j \right)^\top
\end{align}
Considering the sample mean $\langle U \rangle = \frac{1}{n} \sum_{i=1}^n U_i$, we notice that
\begin{align}
    \sum_{i<j} \mathbf{A}_{(ij)}^{(1)} = \frac{2}{n-1} \sum_{i=1}^n \left(U_i - \langle U \rangle \right)\left(U_i - \langle U \rangle \right)^\top
\end{align}
which is a sum of $n$ rank-one projectors. Similarly, a straightforward rearrangement of terms shows that $\sum_{i<j} \mathbf{A}_{(ij)}^{(2)} + \left( \mathbf{A}_{(ij)}^{(2)} \right)^\top$ can be written as the sum of $2n$ rank-two Hermitian matrices. Thus, in the quadratic regime, the application of the following Lemma \ref{lemma:rank_ineq} yields:

\begin{align}
    \|F^{\boldsymbol{\tau}} - F^{\mathbf{H}} \|_{\infty} & \leq \frac{1}{p} \mathrm{rank} \left( \sum_{i<j}  \mathbf{A}_{(ij)}^{(1)} + \mathbf{A}_{(ij)}^{(2)} + \left(\mathbf{A}_{(ij)}^{(2)} \right)^\top \right)\\
    & \leq \frac{5n}{p} \rightarrow 0
\end{align}
\end{proof}

\begin{lemma}[Theorem A.43 of \citep{bai2010spectral}] \label{lemma:rank_ineq}
    Let $\mathbf{Q}$ and $\mathbf{R}$ be two $p \times p$ Hermitian matrices. Then,
    \begin{equation}
        \|F^{\mathbf{Q}} - F^{\mathbf{R}}\|_{\infty} \leq \frac{1}{p} \mathrm{rank}(\mathbf{Q} -\mathbf{R})
    \end{equation}
\end{lemma}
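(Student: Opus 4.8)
The plan is to establish a pointwise bound on eigenvalue counting functions and to deduce the Kolmogorov-distance estimate from it. Writing $k := \mathrm{rank}(\mathbf{Q} - \mathbf{R})$ and, for any $p \times p$ Hermitian matrix $\mathbf{M}$, $N^{\mathbf{M}}(x) := \#\{i : \lambda_i(\mathbf{M}) \leq x\}$ for its eigenvalue counting function, one has $F^{\mathbf{M}}(x) = \frac{1}{p} N^{\mathbf{M}}(x)$. Hence $\|F^{\mathbf{Q}} - F^{\mathbf{R}}\|_{\infty} = \frac{1}{p}\sup_{x \in \mathbb{R}} |N^{\mathbf{Q}}(x) - N^{\mathbf{R}}(x)|$, and it suffices to prove the integer inequality
\begin{equation}
  |N^{\mathbf{Q}}(x) - N^{\mathbf{R}}(x)| \leq k \qquad \text{for every } x \in \mathbb{R}.
\end{equation}

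First I would reduce to a single rank-one perturbation by telescoping. The difference $\mathbf{E} := \mathbf{Q} - \mathbf{R}$ is Hermitian of rank $k$, so its spectral decomposition reads $\mathbf{E} = \sum_{j=1}^{k} \mu_j \, w_j w_j^{*}$ with real nonzero scalars $\mu_j$ and orthonormal vectors $w_j$. I would then interpolate between $\mathbf{R}$ and $\mathbf{Q}$ through the chain $\mathbf{R}_0 := \mathbf{R}$ and $\mathbf{R}_m := \mathbf{R} + \sum_{j=1}^{m} \mu_j w_j w_j^{*}$, so that $\mathbf{R}_k = \mathbf{Q}$ and each increment $\mathbf{R}_m - \mathbf{R}_{m-1} = \mu_m w_m w_m^{*}$ is a rank-one Hermitian perturbation.

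The crux is the single-step estimate $|N^{\mathbf{R}_m}(x) - N^{\mathbf{R}_{m-1}}(x)| \leq 1$ for all $x$. When $\mu_m > 0$ the perturbation is positive semidefinite, so Weyl's monotonicity gives $\lambda_i(\mathbf{R}_m) \geq \lambda_i(\mathbf{R}_{m-1})$ in sorted order, while Cauchy interlacing for rank-one updates gives $\lambda_i(\mathbf{R}_m) \leq \lambda_{i-1}(\mathbf{R}_{m-1})$; together these two sandwichings force the counts of eigenvalues not exceeding $x$ to differ by at most one. The case $\mu_m < 0$ follows by exchanging the roles of $\mathbf{R}_m$ and $\mathbf{R}_{m-1}$. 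Summing over the $k$ steps and applying the triangle inequality yields the displayed integer inequality, and dividing by $p$ and taking the supremum completes the proof.

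I expect the main obstacle to be bookkeeping rather than conceptual: the interlacing inequalities must be stated with conventions that correctly handle eigenvalues lying exactly at the threshold $x$ and repeated eigenvalues, since a careless treatment of ties can appear to violate the one-step bound. An equivalent and perhaps more economical route is to invoke Weyl's inequalities directly with the rank-$k$ matrix $\mathbf{E}$, using $\lambda_{k+1}(\mathbf{E}) \leq 0 \leq \lambda_{p-k}(\mathbf{E})$ to obtain the two-sided interlacing $\lambda_{i+k}(\mathbf{Q}) \leq \lambda_i(\mathbf{R}) \leq \lambda_{i-k}(\mathbf{Q})$ in one stroke, from which the count bound is immediate. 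I would nonetheless keep the telescoping argument as the primary presentation, since its rank-one step is elementary and self-contained.
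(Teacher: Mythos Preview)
The paper does not prove this lemma at all: it is stated as a citation of Theorem~A.43 in \citet{bai2010spectral} and used as a black box in the proof of Proposition~\ref{prop:prop1}. There is therefore no ``paper's own proof'' to compare against.

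Your argument is correct and is essentially the standard proof of this rank inequality. The telescoping through rank-one Hermitian updates, combined with the interlacing $\lambda_i(\mathbf{R}_{m-1}) \leq \lambda_i(\mathbf{R}_m) \leq \lambda_{i-1}(\mathbf{R}_{m-1})$ (for a positive rank-one perturbation, eigenvalues in decreasing order), indeed forces the counting functions to differ by at most one per step. Your alternative via Weyl's inequalities applied directly to the rank-$k$ perturbation is also valid and is in fact closer to how Bai and Silverstein present it: since $\mathbf{E}$ has at most $k$ nonzero eigenvalues, one has $\lambda_{k+1}(\mathbf{E}) \leq 0 \leq \lambda_{p-k}(\mathbf{E})$, and Weyl then gives the two-sided interlacing $\lambda_{i+k}(\mathbf{Q}) \leq \lambda_i(\mathbf{R}) \leq \lambda_{i-k}(\mathbf{Q})$ in one line, from which the bound on $|N^{\mathbf{Q}}(x) - N^{\mathbf{R}}(x)|$ is immediate. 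Either route is fine; the second is slightly shorter and avoids the tie-handling bookkeeping you flagged.
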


\section{Proof of Step 1 \eqref{step1}}\label{pro3}
Let us adapt the proof presented in Section 3.3.2 of \citep{bai2010spectral}. We denote $\E_k$ as the conditional expectation given $\{\mathbf{x}_{k+1}, \dots, \mathbf{x}_n\}$. We can then write:

\begin{equation}
    \g^{p}(z) - \E(\g^p(z)) = \frac{1}{p} \sum_{k=1}^n \gamma_k,
\end{equation}
where

\begin{equation}
    \gamma_k = (\E_k - \E_{k-1})\big[\tr(\mathbf{H} - z)^{-1} - \tr(\mathbf{H}_{k} - z)^{-1}\big],
\end{equation}
and

\begin{equation}
    \mathbf{H}_k = \mathbf{H} - \sum_{i<k} \boldsymbol{\bar{\Theta}}_{(ik)} \boldsymbol{\bar{\Theta}}_{(ik)}^\top - \sum_{k<j} \boldsymbol{\bar{\Theta}}_{(kj)} \boldsymbol{\bar{\Theta}}_{(kj)}^\top.
\end{equation}
By using Theorem A.5 of \citep{bai2010spectral} several times, one can find a constant $C > 0$ independent of $k$ such that:

\begin{equation}
    |\gamma_k| \leq \frac{Cn}{|\Im{z}|}.
\end{equation}
Let us fix $m > 1$. Lemma 2.12 of \citep{bai2010spectral} provides a constant $K_m$ such that:

\begin{equation}
    \E\left[\left|\sum_{k=1}^n \gamma_k\right|^m\right] \leq K_m \E\left[\left(\sum_{k=1}^n |\gamma_k|^2\right)^{m/2}\right].
\end{equation}
Hence,

\begin{equation}
    \E\left[|\g^{p}(z) - \E(\g^p(z))|^m\right] \leq \frac{K_m}{p^m} \left(n\frac{C^2n^2}{|\Im{z}|^2}\right)^{m/2}.
\end{equation}
Since $n \underset{p\to +\infty}{=}  O(\sqrt{p})$, we have:

\begin{equation}
    \E\left[|\g^{p}(z) - \E(\g^p(z))|^m\right] = O\left(\frac{1}{p^{m/4}}\right).
\end{equation}
By taking $m > 4$, the Borel-Cantelli lemma yields:

\begin{equation}
    \g^{p}(z) - \E(\g^p(z)) \underset{p\to +\infty}{\longrightarrow} 0
\end{equation}
almost surely.

\bibliographystyle{apsrev4-2} 
\bibliography{sample}

\end{document}